\theoremstyle{plain} 
\newtheorem{thm}{Theorem}[section]
\newtheorem{cor}[thm]{Corollary}
\newtheorem{lem}[thm]{Lemma}
\theoremstyle{definition}
\theoremstyle{remark}
\numberwithin{equation}{section}
\def\NN{{\mathbb N}}
\def\ZZ{{\mathbb Z}}
\def\fS{{\mathfrak S}}
\def\fm{{\mathfrak m}}
\def\ba{{\mathbf a}}
\def\bb{{\mathbf b}}
\def\be{{\mathbf e}}
\def\height{\operatorname{ht}}
\def\too{\longrightarrow}
\def\reg{\operatorname{reg}}
\def\SYT{\operatorname{SYT}}
\def\Tab{\operatorname{Tab}}
\def\supp{\operatorname{supp}}
\def\init{\operatorname{in}}
\def\trm{\operatorname{trm}}
\def\ISp{I^{\rm Sp}}
\def\JSp{J^{\rm Sp}}
\def\ba{\mathbf a}
\def\too{\longrightarrow}
\def\bxi{\overline{x_n^i}}
\def\bxm{\overline{x_n^m}}
\def\chara{\operatorname{char}}
\def\<{{\langle}}
\def\>{{\rangle}}
\title{Regularity of Cohen-Macaulay Specht ideals}
\author{Kosuke Shibata}
\address{Department of Mathematics, Okayama University, Okayama, Okayama 700-8530, Japan}
\email{pfel97d6@s.okayama-u.ac.jp}
\author{Kohji Yanagawa}
\address{Department of Mathematics, Kansai University, Suita, Osaka 564-8680, Japan}
\email{yanagawa@kansai-u.ac.jp}
\thanks{The second author is partially supported by JSPS Grant-in-Aid for Scientific Research (C) 19K03456.}
\date{--, --, --}
\keywords{Specht polynomial, Specht ideal, subspace arrangement, Cohen--Macaulay ring}
\subjclass{Primary 13F99}
\begin{document}
\maketitle

\begin{abstract}
For a partition $\lambda$ of $n \in \NN$, let $I^{\rm Sp}_\lambda$ be the ideal of $R=K[x_1,\ldots,x_n]$ generated by all Specht polynomials of shape $\lambda$.
In the previous paper, the second author showed that if $R/I^{\rm Sp}_\lambda$ is Cohen-Macaulay, then $\lambda$ is either $(n-d,1,\ldots,1),(n-d,d)$, or $(d,d,1)$, and the converse is true if ${\rm char}(K)=0$. 
In this paper, we compute the Hilbert series of $R/I^{\rm Sp}_\lambda$ for $\lambda=(n-d,d)$ or $(d,d,1)$. Hence, we get the Castelnuovo-Mumford regularity of $R/I^{\rm Sp}_\lambda$, when it is Cohen-Macaulay. In particular, $\ISp_{(d,d,1)}$ has a $(d+2)$-linear resolution in the Cohen--Macaulay case.  

\end{abstract}

\section{Introduction}
For a positive integer $n$,  a {\it partition} of $n$ is a sequence $\lambda = (\lambda_1, \ldots, \lambda_l)$ of integers with $\lambda_1 \ge \lambda_2 \ge \cdots \ge \lambda_l \ge 1$ and $\sum_{i=1}^l \lambda_i =n$.  
The {\it Young tableau} of shape $\lambda$ is a bijection from $[n]:=\{1,2, \ldots, n\}$ to the set of boxes in the Young diagram of $\lambda$. 
 For example, the following is a tableau of shape  $(4,2,1)$. 
\begin{equation}\label{ex of T}
\begin{ytableau}
3 & 5 & 1& 7   \\
6 & 2    \\
4 \\
\end{ytableau}
\end{equation}
Let $\Tab(\lambda)$ be the set of  Young tableaux of shape $\lambda$. If $\lambda = (\lambda_1, \ldots, \lambda_l)$, then we simply write as $\Tab(\lambda_1, \ldots, \lambda_l)$. 
We say a tableau $T$ is {\it standard}, if all columns (resp. rows) are increasing from top to bottom (resp. from left to right). 
Let $\SYT(\lambda)$ (or $\SYT(\lambda_1, \ldots, \lambda_l)$) be the set of standard tableaux of  shape $\lambda = (\lambda_1, \ldots, \lambda_l)$. 

Let $R=K[x_1, \ldots, x_n]$ be a polynomial ring over a field $K$, $\lambda$ a partition of $n$, and $T$ a Young tableau of shape $\lambda$.   
If the $j$-th column of $T$ consists of $j_1, j_2, \ldots, j_m$ in the order from top to bottom, then 
$$f_T (j) := \prod_{1 \le s < t \le m} (x_{j_s}-x_{j_t}) \in R$$
(if the $j$-th column has only one box, then we set $f_T(j)=1$).  
 The {\it Specht polynomial} $f_T$ of $T$ is given by  
$$f_T := \prod_{j=1}^{\lambda_1} f_T(j).$$
For example, if $T$ is the tableau \eqref{ex of T}, then $f_T=(x_3-x_6)(x_3-x_4)(x_6-x_4)(x_5-x_2).$ 

The symmetric group $\fS_n$ acts on the vector space $V_\lambda$ spanned by 
$\{ \, f_T  \mid T \in \Tab(\lambda) \}.$
 An $\fS_n$-module of this form is called a {\it Specht module}, and very important in the  theory of  symmetric groups, especially in the characteristic 0 case.   See, for example, \cite{Sa}. Here, we only remark that $\{ f_T  \mid T \in \SYT(\lambda) \}$ forms a basis of $V_\lambda$. 

In the previous paper \cite{Y},  the second author studied the {\it ideal}  
$$\ISp_\lambda :=(\, f_T  \mid T \in \Tab(\lambda) )$$
of $R$.  We have $\height(\ISp_\lambda)=\lambda_1$ by \cite[Proposition~2.3]{Y}. 
The main result of \cite{Y} states the following.

\begin{thm}[{\cite[Proposition~2.8 and Corollary~4.4]{Y}}]
\label{prev paper main}
If $R/\ISp_\lambda$ is Cohen--Macaulay, then one of the following conditions holds.  
\begin{itemize}
\item[(1)] $\lambda =(n-d, 1, \ldots, 1)$, 
\item[(2)] $\lambda =(n-d,d)$, 
\item[(3)] $\lambda =(d,d,1)$. 
\end{itemize}
If $\chara(K)=0$, the converse is also true. 
\end{thm}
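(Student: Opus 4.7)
The plan is to split the proof into the necessity direction (a classification) and the sufficiency direction (verification in characteristic zero). For necessity I would rely on Hartshorne's connectedness theorem: if $R/\ISp_\lambda$ is Cohen--Macaulay, then the variety $V(\ISp_\lambda)$ must be connected in codimension one. Since $\ISp_\lambda$ is equidimensional of height $\lambda_1$ by \cite[Proposition~2.3]{Y}, its zero set decomposes as a union of linear subspaces $L_\mu = \{x_i = x_j : i,j \text{ in a common block of } \mu\}$, where $\mu$ runs over the set partitions of $[n]$ whose block-size sequence satisfies an explicit combinatorial condition relative to $\lambda'$ (derivable via a Hall-type argument on the columns of tableaux: $L_\mu \subseteq V(\ISp_\lambda)$ iff no transversal of the column sets of any $T \in \Tab(\lambda)$ can avoid repeating a block of $\mu$). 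My first step is to pin down this condition and describe the minimal components; then, for $\lambda$ outside the three listed families, I would exhibit a minimal component whose codimension-one intersections with other components are too sparse to form a connected graph, producing the desired obstruction.

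For sufficiency in characteristic zero, each of the three families needs separate treatment. The hook case $\lambda = (n-d, 1, \ldots, 1)$ looks most tractable, since $\ISp_\lambda$ is then generated by Vandermondes on $(d+1)$-element subsets of variables; I would attempt either to identify an explicit linear system of parameters directly or to invoke known Cohen--Macaulayness results for the arrangement $\bigcup L_\mu$ indexed by set partitions with one block of size $\geq d+1$. For $\lambda = (n-d, d)$ and $\lambda = (d,d,1)$, I would lean on $\fS_n$-equivariance: in characteristic zero the symmetric group acts semisimply on $R$ and on $\ISp_\lambda$, and I would try to build an $\fS_n$-equivariant resolution of length $\lambda_1$, either by restricting to suitable isotypic pieces or by exhibiting a flat degeneration to a Cohen--Macaulay squarefree initial ideal.

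The main obstacle I anticipate is the sufficiency argument for $\lambda = (d,d,1)$: the three-row structure resists reduction to a hook or two-row problem, and finding a clean regular sequence or resolution by hand seems difficult. My fall-back plan is to first compute the Hilbert series of $R/\ISp_{(d,d,1)}$ (which, per the abstract, is the task of the present paper) and then verify $\depth(R/\ISp_{(d,d,1)}) \geq d+1$ via a dimension count against a candidate minimal free resolution. The abstract's remark about a $(d+2)$-linear resolution in this case is suggestive: if one can guess the Betti numbers from the Hilbert series under the linearity hypothesis, then checking Cohen--Macaulayness reduces to a combinatorial identity that should be within reach once the Hilbert series is in hand.
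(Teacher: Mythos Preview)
This theorem is not proved in the present paper; it is quoted from the earlier paper \cite{Y} (Proposition~2.8 and Corollary~4.4), and the paragraph following the statement only sketches the strategy used there. So there is no detailed argument here to match against. That said, the sketch the paper does give is quite different from your plan for the sufficiency direction: for $\lambda=(n-d,d)$ and $\lambda=(d,d,1)$, the route in \cite{Y} is to show that $\ISp_\lambda$ is a radical ideal over any $K$ and then invoke a result of Etingof--Gorsky--Losev \cite{EGL} on representations of rational Cherednik algebras, which handles the characteristic~$0$ Cohen--Macaulayness. None of the ingredients you propose---an $\fS_n$-equivariant resolution, a flat degeneration to a Cohen--Macaulay squarefree initial ideal, or a direct regular-sequence construction---appear in that outline; the heavy lifting is outsourced to \cite{EGL}. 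For the necessity direction the paper gives no hint beyond the citation, so your Hartshorne-connectedness idea may or may not coincide with what \cite{Y} actually does.

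Your fallback plan for $(d,d,1)$ has a genuine circularity problem in the context of this paper. You propose to compute the Hilbert series first and then deduce Cohen--Macaulayness by reading off a linear resolution. But look at how the paper actually computes $H(R/\ISp_{(d,d,1)},t)$ in the proof of Theorem~\ref{main}: after reducing to characteristic~$0$, it \emph{assumes} $R/\ISp_{(d,d,1)}$ is Cohen--Macaulay (via Theorem~\ref{prev paper main}) and then applies \cite[Theorem~3.2]{RV}, which requires the Cohen--Macaulay hypothesis, to conclude the resolution is $(d+2)$-linear and hence obtain the $h$-vector. So the Hilbert-series computation here depends on Cohen--Macaulayness, not the other way around; you cannot bootstrap in the direction you suggest without an independent computation of the Hilbert series, which the paper does not provide.
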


The case (1) is treated in the joint paper \cite{WY} with J. Watanabe, and it is shown that $R/\ISp_{(n-d,1,\ldots,1)}$ is Cohen--Macaulay over any $K$.
To prove the last assertion of the above theorem for the cases (2) and (3), we first show that $\ISp_\lambda$ is a radical ideal (at least, in these cases) over any $K$, and use a result of Etingof et al. \cite{EGL}, which concerns the characteristic 0 case.

The paper \cite{WY} computes the Betti numbers of $R/\ISp_{(n-d, 1, \ldots, 1)}$, it means that we know its Hilbert series in this case.  
 In the present paper,  we compute the Hilbert series 
$$H(R/\ISp_\lambda, t):=\sum_{i \in \NN} \dim_K[R/\ISp_\lambda]_i \cdot t^i $$ in the cases (2) and (3) of Theorem~\ref{prev paper main}. 
The main tool for computation is the following recursive formulas
$$H(R/\ISp_{(n-d,d)}, t) = H(S/\ISp_{(n-d-1,d)}, t) + \frac{t}{1-t} H(S/\ISp_{(n-d,d-1)}, t)$$
for $n-d >d \ge 2$, and 
$$H(R/\ISp_{(d,d)}, t) = H(S/\ISp_{(d-1,d-1,1)}, t) + \frac{t}{1-t} H(S/\ISp_{(d,d-1)}, t)$$
for $n=2d \ge 4$.  Here we set $S=K[x_1, \ldots, x_{n-1}]$.  Since $(n-d-1,d)$ is a partition of $n-1$, $\ISp_{(n-d-1,d)}$ is an ideal of $S$. The same is true for other partitions of $n-1$. 

As an application, we have the following. 

\begin{thm}\label{main cor}
If $R/\ISp_{(n-d,d)}$ is Cohen-Macaulay (e.g., when $\chara(K)=0$), then we have $\reg (R/\ISp_{(n-d,d)})=d$ for $d \ge 2$, and if $R/\ISp_{(d,d,1)}$ is Cohen-Macaulay (e.g., when $\chara(K)=0$), then $\reg (R/\ISp_{(d,d,1)})=d+1$.  Hence $\ISp_{(d,d,1)}$ has a $(d+2)$-linear resolution in this case. 
\end{thm}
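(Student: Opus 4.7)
The underlying principle is the standard Hilbert-series criterion for regularity: for a Cohen--Macaulay graded quotient $M$ of dimension $e$, writing the Hilbert series in reduced form $H(M,t) = Q(t)/(1-t)^e$ with $Q(1) \neq 0$ gives $\reg M = \deg Q(t)$. Combined with $\height(\ISp_\lambda) = \lambda_1$ from \cite{Y}, we have $\dim R/\ISp_{(n-d,d)} = d$ and $\dim R/\ISp_{(d,d,1)} = d+1$, so the task reduces to showing that the reduced-form numerators have degree exactly $d$ and $d+1$, respectively.

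To handle $\lambda = (n-d,d)$, I would set $H(R/\ISp_{(n-d,d)}, t) = Q_{n,d}(t)/(1-t)^d$; the first recursion in the introduction then translates to the polynomial identity
\[
Q_{n,d}(t) \;=\; Q_{n-1,d}(t) \;+\; t\, Q_{n-1,d-1}(t)
\]
valid for $n-d > d \ge 2$. A double induction (primary on $d$, secondary on $n$) propagates both $\deg Q_{n,d} \le d$ and positivity of the coefficient of $t^d$. The base data needed includes the hook case $\ISp_{(n-1,1)}$ already handled in \cite{WY} together with a direct check of the boundary $n = 2d$ (for $d=2$ this is $\ISp_{(2,2)}$), which is accessible via the second recursion.

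For $\lambda = (d,d,1)$ with $n = 2d+1$, the second recursion with $d$ replaced by $d+1$ rearranges to
\[
H(R/\ISp_{(d,d,1)}, t) \;=\; H(\wR/\ISp_{(d+1,d+1)}, t) \;-\; \tfrac{t}{1-t}\, H(R/\ISp_{(d+1,d)}, t),
\]
where $\wR = K[x_1,\ldots,x_{2d+2}]$. Clearing the common denominator $(1-t)^{d+1}$ reduces the degree claim to a subtraction of polynomials whose degrees are already pinned down by the previous paragraph; tracking leading coefficients then yields $\deg Q_{(d,d,1)} = d+1$.

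Finally, the column heights of $(d,d,1)$ are $3, 2, \ldots, 2$ (one $3$ followed by $d-1$ twos), so each Specht polynomial of this shape has degree $3 + (d-1) = d+2$; hence $\reg \ISp_{(d,d,1)} = \reg R/\ISp_{(d,d,1)} + 1 = d+2$ coincides with the generator degree and forces the $(d+2)$-linear resolution. The main obstacle I anticipate is ruling out unexpected cancellations in the leading coefficients of the polynomial recursions --- particularly at the boundary $n = 2d$ where the two recursions must be chained --- and checking the initial cases cleanly.
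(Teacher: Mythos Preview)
Your reduction to the Hilbert-series criterion for regularity is correct and is exactly what the paper does. The gap is in how you plan to pin down the degree of the numerators using only the two recursions.

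The first recursion $Q_{n,d}=Q_{n-1,d}+t\,Q_{n-1,d-1}$ is fine once you know the boundary value $Q_{2d,d}$, i.e.\ the numerator for $\ISp_{(d,d)}$. For that boundary you invoke the second recursion, which (after clearing denominators) reads
\[
Q_{(d,d)} \;=\; Q_{(d-1,d-1,1)} \;+\; t\,Q_{(d,d-1)}.
\]
But your method for handling $(d-1,d-1,1)$ is to rearrange \emph{this same identity}: with $d$ replaced by $d$, your displayed formula for the $(d,d,1)$ case becomes $Q_{(d-1,d-1,1)}=Q_{(d,d)}-t\,Q_{(d,d-1)}$. That is not new information --- it is the same linear relation, so you have one equation in the two unknowns $Q_{(d,d)}$ and $Q_{(d-1,d-1,1)}$. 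Knowing $\deg Q_{(d,d-1)}=d-1$ only tells you that their \emph{difference} has degree $d$; it does not bound either degree separately, and no amount of leading-coefficient bookkeeping will break this. Your scheme happens to close for $d=2$ because $(1,1,1)$ is a hook and falls under \cite{WY}, but it stalls at $d=3$, where $(2,2,1)$ is not a hook.

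The paper resolves this by computing $H(R/\ISp_{(d,d,1)},t)$ \emph{independently} of the recursions: once Cohen--Macaulayness is available, the hook length formula gives $\#\SYT(d,d,1)=\binom{2d+1}{d+2}$ minimal generators in degree $d+2$, and the criterion of Renter\'ia--Villarreal \cite[Theorem~3.2]{RV} (a Cohen--Macaulay ideal of height $d$ with exactly $\binom{2d+1}{d+2}$ generators in degree $d+2$) forces a $(d+2)$-linear resolution, hence the explicit $h$-polynomial of degree $d+1$. This supplies $Q_{(d-1,d-1,1)}$ outright, after which the second recursion gives $Q_{(d,d)}$ and the first recursion propagates to all $Q_{n,d}$. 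The Gr\"obner basis work in \S3 of the paper is there only to remove the characteristic-zero hypothesis from the Hilbert-series computation; for the regularity statement under your Cohen--Macaulay assumption it is not needed, but the \cite{RV} input (or something equivalent) is.
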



Since $R/\ISp_{(d,d,1)}$ (not $S/\ISp_{(d-1,d-1,1)}$) does not appear in the above recursion formulas, these formulas are not enough. So we use  \cite[Theorem 3.2]{RV} for $R/\ISp_{(d,d,1)}$. 
However, this result assumes the Cohen--Macaulay property, so we have to show that 
the Hilbert seres of $R/\ISp_{(d,d,1)}$ does not depend on $\chara(K)$. We prove this (essentially) in \S3 using the Gr\"obner basis argument.   


\section{Main theorem and related arguments}
For the definition and basic properties of Specht ideals $\ISp_\lambda$, consult the previous section. 
Here we just remark that the Cohen--Macaulay-ness of $R/\ISp_\lambda$ actually depends on $\chara(K)$. For example, 
$R/\ISp_{(n-3,3)}$ is Cohen--Macaulay if and only if $\chara(K) \ne 2$. The same is true for $R/\ISp_{(2,2,1)}$. See \cite[Theorem~5.3]{Y}.     
If $\chara(K) =2$, {\it Macaulay2} computation shows that  $R/\ISp_{(2,2,1)}$ and $R/\ISp_{(n-3,3)}$ for $n \le 10$ do not satisfy even Serre's $(S_2)$ condition. So the $(S_2)$ condition of $R/\ISp_\lambda$ also depends on $\chara(K)$ (recall that the $(S_2)$-ness of the Stanley--Reisner  ring $K[\Delta]$ does not depend on $\chara(K)$). 
 {\it Macaulay2} computation also shows that  $R/\ISp_{(4,4)}$  is not Cohen--Macaulay, if $\chara(K)=2,3$. See \cite[Conjecture~5.5]{Y}. To the authors' best knowledge, examples of $R/\ISp_\lambda$ satisfying the $(S_2)$ condition are Cohen--Macaulay.  

\bigskip

We regard $S=K[x_1,\ldots, x_{n-1}]$ as a subring of $R=K[x_1,\ldots, x_n]$. 
If $\mu$ is a partition of $n-1$, then the Specht ideal $\ISp_\mu$ is an ideal of $S$.  

\begin{thm}\label{main}
The Hilbert series of $R/\ISp_{(n-d,d)}$ is given by 
$$H(R/\ISp_{(n-d,d)}, t)= \frac{1+h_1 t +h_2 t^2+\cdots + h_d t^d }{(1-t)^d}$$
with 
$$
h_i = \begin{cases}
\binom{n-d+i-1}{i} & \text{if $1 \le i \le d-1$,} \\
\binom{n-1}{d-2} &  \text{if $ i = d$.}
\end{cases}
$$
Similarly, when $n=2d+1$, the Hilbert series of $R/\ISp_{(d,d,1)}$ is given by 
$$H(R/\ISp_{(d,d,1)}, t)= \frac{1+h_1 t +h_2 t^2+\cdots + h_{d+1} t^{d+1} }{(1-t)^{d+1}}$$
with 
$$h_i = \binom{d+i-1}{i}$$
for all $1 \le i \le d+1$. 
\end{thm}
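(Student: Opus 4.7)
The plan is to prove both identities together by an induction on $n$, driven by the two recursive formulas stated in the introduction, with a separate direct argument handling the shape $(d,d,1)$ (which does not appear as the left-hand side of either recursion).

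I would first dispose of the base cases. For the $(n-d,d)$ formula at $d=1$, $\ISp_{(n-1,1)}=(x_i-x_j)_{i<j}$ so $R/\ISp_{(n-1,1)}\cong K[x]$ with Hilbert series $1/(1-t)$, and the sole listed coefficient $h_1=\binom{n-1}{-1}$ vanishes, so the formula reads $1/(1-t)$ in agreement. For the $(d,d,1)$ formula at $d=1$, $\ISp_{(1,1,1)}$ is the principal ideal generated by the Vandermonde of degree $3$, so $H=(1+t+t^2)/(1-t)^2$, matching $\sum_{i=0}^{2}\binom{i}{i}t^i/(1-t)^2$.

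Next, for the strict two-row case $n-d>d\ge 2$, I would use the first recursion. After clearing the denominator $(1-t)^d$, the recursion becomes the polynomial identity $P_{n-d,d}(t)=P_{n-d-1,d}(t)+t\,P_{n-d,d-1}(t)$, where $P_{a,b}(t)$ denotes the claimed numerator. Coefficient-wise this is Pascal's identity $\binom{n-d+i-1}{i}=\binom{n-d+i-2}{i}+\binom{n-d+i-2}{i-1}$ for $1\le i\le d-1$, and $\binom{n-1}{d-2}=\binom{n-2}{d-2}+\binom{n-2}{d-3}$ at $i=d$, so the induction on $n$ carries through. The diagonal case $n=2d$ is handled by the second recursion in the same style: assuming inductively the $(d-1,d-1,1)$ formula on $n-1$ variables and the just-proved $(d,d-1)$ formula, the identity $P_{d,d}(t)=P_{d-1,d-1,1}(t)+t\,P_{d,d-1}(t)$ reduces to another run of Pascal identities together with the symmetry $\binom{2d-2}{d}=\binom{2d-2}{d-2}$ used at the top coefficient.

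The main obstacle is the $(d,d,1)$ case with $n=2d+1$, which is not the left-hand side of either recursion. The plan is to invoke \cite[Theorem~3.2]{RV}, which computes the Hilbert series of certain Cohen--Macaulay rings. Since that theorem requires Cohen--Macaulayness, and this may fail in positive characteristic, I would first establish that $H(R/\ISp_{(d,d,1)},t)$ is independent of $\chara(K)$---this is the Gr\"obner basis argument deferred to \S3. Having reduced to $\chara(K)=0$, Theorem~\ref{prev paper main} yields the Cohen--Macaulay property, and \cite[Theorem~3.2]{RV} then produces the claimed $h$-vector $\bigl(\binom{d+i-1}{i}\bigr)_{0\le i\le d+1}$.
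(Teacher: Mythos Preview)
Your proposal is correct and follows essentially the same route as the paper: induction on $n$ via the two recursions (reduced to Pascal identities on the numerators), with the $(d,d,1)$ case handled separately by reduction to characteristic zero and the Renter\'ia--Villarreal criterion. Two small points the paper makes explicit and you should too: \S3 proves characteristic-freeness for the auxiliary ideal $\JSp_{(d,d)}=\ISp_{(d,d)}+\fm^{\<d+1\>}$, not for $\ISp_{(d,d,1)}$ itself---the passage goes through $\pi(\ISp_{(d,d,1)})=\ISp_{(d,d)}\cap\fm^{\<d+1\>}$ and the associated short exact sequence, which also consumes the inductively known characteristic-freeness of $S/\ISp_{(d,d)}$; and the invocation of \cite[Theorem~3.2]{RV} requires as input the number of minimal generators, computed via the hook formula as $\#\SYT(d,d,1)=\binom{2d+1}{d+2}$, after which the linear resolution (and hence the $h$-vector) follows.
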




\noindent{\it The proof of Theorem~\ref{main cor} (assuming Theorem~\ref{main}).} 
For a Cohen--Macaulay graded ring $R/I$ of dimension $d$ whose Hilbert series is given by 
$$H(R/I, t)= \frac{1+h_1 t +h_2 t^2+\cdots + h_st^s }{(1-t)^d}$$
with $h_s \ne 0$, it is well-known that $\reg (R/I)=s$.  So the assertion follows from  Theorem~\ref{main}. 
\qed

\section{The initial monomials of Specht polynomials}
In this section, we will give Gr\"obner basis theoretic results, which can be used to show the Hilbert series do not depend $\chara(K)$. 
See \cite[\S 15]{E} for notions and results of the Gr\"obner basis theory. 
Here we consider the lexicographic order on $R$ with  $x_n \succ x_{n-1} \succ \cdots \succ x_1$. 
Let $\init(f)$ be the initial monomial of $0 \ne f \in R$.  

Consider a tableau 
$$
T = 
\ytableausetup
{mathmode, boxsize=2.2em}
\begin{ytableau}
i_1 & i_2  & \cdots &i_d & i_{d+1} & \cdots & i_{n-d}  \\
j_1 & j_2 & \cdots & j_d
\end{ytableau}
\in \Tab(n-d, d).
$$
Since the permutation of $i_k$ and $j_k$ only changes the sign of $f_T$, we may assume that $i_k < j_k$ for all  $1 \le k \le d$. 
Then we have $\init(f_T)= x_{j_1}x_{j_2} \cdots x_{j_d}$.

The following lemma holds for a  general partition $\lambda$, and must be well-known to specialists. 
Since we could not find appropriate references, we give a quick proof for the reader's convenience.     

\begin{lem}\label{initial terms} For a partition $\lambda =(n-d,d)$, we have the following.  

\begin{itemize}
\item[(1)] For distinct  $T, T'\in \SYT(\lambda)$, we have $\init(f_T) \ne \init(f_{T'})$.  

\item[(2)] Let $V_{\lambda} \subset R$ be the Specht module of shape $\lambda$. For $0 \ne f \in V_\lambda$, there is a unique $T \in \SYT(\lambda)$ such that $\init(f)=\init(f_T)$. 
\end{itemize}
\end{lem}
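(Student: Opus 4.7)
The plan is to reduce everything to an explicit description of $\init(f_T)$ for a standard tableau $T \in \SYT(n-d,d)$. Write the rows of such a $T$ as $i_1 < i_2 < \cdots < i_{n-d}$ (top) and $j_1 < j_2 < \cdots < j_d$ (bottom). The column-increasing condition forces $i_k < j_k$ for $1 \le k \le d$, so by the computation already noted just before the lemma, $\init(f_T) = x_{j_1} x_{j_2} \cdots x_{j_d}$. In particular, $\init(f_T)$ is a squarefree monomial whose support is precisely the bottom row of $T$.

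For part (1), the bottom row $J := \{j_1, \ldots, j_d\} \subset [n]$ already determines the standard tableau $T$, because the top row is forced to be $[n] \setminus J$ listed in increasing order. Hence $T \mapsto J$ is injective on $\SYT(n-d,d)$, and then $J \mapsto \prod_{j \in J} x_j$ is trivially injective, so distinct $T, T'$ produce distinct initial monomials.

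For part (2), expand $f$ in the basis $\{ f_T \mid T \in \SYT(\lambda) \}$ of $V_\lambda$, writing $f = \sum_{T \in \SYT(\lambda)} c_T \, f_T$ with not all $c_T$ zero. Each $f_T$ has its initial monomial occurring with coefficient $\pm 1$, and by (1) these leading monomials are pairwise distinct as $T$ ranges over $\SYT(\lambda)$. Therefore no cancellation among leading terms can occur in the sum, and $\init(f) = \init(f_{T^*})$, where $T^*$ is the $T$ with $c_T \ne 0$ whose $\init(f_T)$ is lex-maximal. Uniqueness of $T^*$ in the conclusion of (2) then follows again from (1).

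The argument is essentially bookkeeping and I do not expect a real obstacle. The only point deserving a moment of care is the initial-monomial formula for $f_T$ itself, which reduces to the observation that under $x_n \succ \cdots \succ x_1$ every linear factor $x_{i_k} - x_{j_k}$ with $i_k < j_k$ contributes $x_{j_k}$ to the leading term. The very same strategy should extend to a general shape $\lambda$ once one knows the analogous explicit description of $\init(f_T)$ (e.g.\ the product of the entries outside the first row), which is presumably what the authors have in mind when they note that the lemma holds for arbitrary $\lambda$.
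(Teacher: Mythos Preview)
Your proof is correct and follows essentially the same route as the paper: part (1) is the observation that the second row determines a standard tableau of shape $(n-d,d)$, and part (2) is deduced from (1) together with the fact that $\{f_T \mid T \in \SYT(\lambda)\}$ is a basis of $V_\lambda$, with you spelling out the no-cancellation argument that the paper leaves implicit. One small caveat on your closing parenthetical: for shapes with more than two rows the initial monomial of $f_T$ is not simply the product of entries outside the first row but rather carries higher powers from longer columns, so the extension to general $\lambda$ needs a slightly more refined formula.
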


\begin{proof}
(1) If $\init(f_T) =\init(f_{T'})$ holds for $T, T'\in \SYT(\lambda)$, then the second rows of $T$ and $T'$ are same. It means that $T=T'$. 

(2) It is well-known that $\{ f_T \mid T \in \SYT(\lambda) \}$ forms a basis of $V_\lambda$. Hence the assertion follows from (1). 
\end{proof}

In the rest of this section, we assume that $n=2d$. 
Let $\fm^{\<d+1 \>}$ be the ideal of $R$ generated by all squarefree monomials of degree $d+1$, and set 
$$\JSp_{(d,d)}:=\ISp_{(d,d)} +\fm^{\<d+1\>}.$$ 
For $\ba \in \NN^n$, set $x^\ba:=\prod_{i \in [n]} x_i^{a_i} \in R$. 
For $f=\sum_{\ba \in \NN^n} c_\ba x^\ba \in R$ ($c_\ba \in K$), we call
$$\trm(f):= \sum_{x^\ba \not \in \fm^{\<d+1\>}} c_\ba x^\ba \in R$$
the {\it trimmed form} of $f$. For example, if $d=2$ and $f=x_1x_4^2-2x_2x_3^2+3x_1x_3x_4-x_2x_3x_4$, then we have  $\trm(f)=x_1x_4^2-2x_2x_3^2$.

For $F \subset [n]$ with $\# F=:c \le d$,  let 
$\Tab_F (d, d-c)$ be the set of Young tableaux of shape  $(d, d-c)$ with the letter set $[n] \setminus F$. 
For example, if $n=8$ (i.e., $d=4$) and $F=\{1,6\}$, then 
$$
\ytableausetup
{mathmode, boxsize=1.5em}
\begin{ytableau}
2  & 8 & 7& 3\\
4 & 5
\end{ytableau}
$$
 is an element of $\Tab_F(4,2)$. For the convention, set $\Tab_\emptyset (d, d):= \Tab(d,d)$. If $\#F=d$, then $T \in \Tab_F(d,0)$ consists of a single row, and we have $f_T =1$.  

For a subset $F \subset [n]$, set $x^{F} := \prod_{i \in F}x_i \in R$ and $x^{2F} := \prod_{i \in F}x_i^2 \in R$. 
For a monomial $x^\ba \in R$, set $\supp(x^\ba):=\{ \,  i \mid a_i > 0 \}$. 

\begin{lem}\label{trm f_T}
Let $x^\ba \in R$ be a monomial with $F:= \supp (x^\ba)$, and set $c:= \# F$.  
If $\trm(x^\ba f_T) \ne 0$ for $T \in \Tab(d,d)$, then we have $c \le d$, and there is some $T' \in \Tab_F(d,d-c)$ such that 
$$\trm(x^\ba f_T) =x^{\ba} x^F  f_{T'}.$$ 
In prticular, we have 
$$\trm(x^F f_T) = x^{2F}  f_{T'}.$$ 
The converse also holds, that is, any  $x^{\ba} x^F f_{T'}$ for $T' \in \Tab_F(d,d-c)$ equals $\trm(x^\ba f_T)$ for some $T \in \Tab(d,d)$.  
\end{lem}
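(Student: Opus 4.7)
The plan is to expand $f_T$ as a signed sum of squarefree degree-$d$ monomials and track which terms survive trimming. Writing $T \in \Tab(d,d)$ with columns $\{i_k, j_k\}$ for $k=1, \ldots, d$ (these partition $[n]=[2d]$), we have
$$f_T = \prod_{k=1}^d (x_{i_k} - x_{j_k}) = \sum_{\ell} \epsilon_\ell \, x_{\ell_1} \cdots x_{\ell_d},$$
where $\ell = (\ell_1, \ldots, \ell_d)$ ranges over $\prod_{k=1}^d \{i_k, j_k\}$ and $\epsilon_\ell = (-1)^{\#\{k \, : \, \ell_k = j_k\}}$. Since the columns of $T$ are disjoint, the support $L := \{\ell_1, \ldots, \ell_d\}$ of each such monomial has size exactly $d$.

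Now $x^\ba \cdot x_{\ell_1} \cdots x_{\ell_d}$ survives $\trm$ exactly when it lies outside $\fm^{\<d+1\>}$, i.e.\ when $|F \cup L| \le d$; since $|L|=d$, this forces $F \subseteq L$, and in particular $c \le d$. Because each column of $T$ contributes exactly one letter to $L$, the inclusion $F \subseteq L$ further requires that each column contains at most one element of $F$. Setting $F' := \{k \, : \, \{i_k, j_k\} \cap F \ne \emptyset \}$ (so $|F'|=c$), for each $k \in F'$ the choice of $\ell_k$ is forced to equal the unique $F$-element $m_k$ in column $k$, while for $k \notin F'$ the choice remains free. Collecting surviving terms therefore factors cleanly:
$$\trm(x^\ba f_T) = x^\ba \, \Big( \prod_{k \in F'} \epsilon_k \, x_{m_k} \Big) \prod_{k \notin F'} (x_{i_k} - x_{j_k}) = \pm \, x^\ba \, x^F \prod_{k \notin F'} (x_{i_k} - x_{j_k}),$$
where $\epsilon_k = \pm 1$ records whether $m_k$ sits in the top or bottom row of $T$.

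To produce the required $T' \in \Tab_F(d, d-c)$, recall that the shape $(d, d-c)$ has $d-c$ columns of height $2$ and $c$ columns of height $1$; fill the height-$2$ columns with the pairs $\{i_k, j_k\}$ for $k \notin F'$, and fill the height-$1$ columns with the orphan elements $\{i_k, j_k\} \setminus F$ for $k \in F'$, choosing the vertical orientation within each height-$2$ column to absorb the overall sign. Then $f_{T'} = \prod_{k \notin F'} (x_{i_k} - x_{j_k})$, so $\trm(x^\ba f_T) = x^\ba \, x^F \, f_{T'}$; the ``in particular'' statement is the case $x^\ba = x^F$. For the converse, given any $T' \in \Tab_F(d, d-c)$, construct $T \in \Tab(d,d)$ by taking the height-$2$ columns of $T'$ as columns of $T$ and extending each height-$1$ column of $T'$ to a height-$2$ column by pairing its lone entry with a distinct element of $F$ (any bijection will do), again orienting vertically to match signs; the forward calculation then yields the claimed identity. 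The main obstacle is sign bookkeeping---the identity must hold on the nose rather than up to $\pm$---which we resolve by exploiting the fact that an in-column transposition of either $T$ or $T'$ negates the corresponding Specht polynomial, giving us enough flexibility to align signs exactly.
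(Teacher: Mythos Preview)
Your proof is correct and takes essentially the same approach as the paper's: both arguments identify that the terms of $x^\ba f_T$ surviving trimming are precisely those in which each element of $F$ is selected from a distinct column of $T$, and then construct $T'$ by deleting those $F$-entries from $T$. Your version is more explicit about the expansion and the sign bookkeeping, whereas the paper first rearranges $T$ (via column permutations and in-column swaps) into a canonical form with the elements of $F$ occupying a block of the second row and then reads off $T'$ directly.
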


\begin{proof}
Without loss of generality, we may assume that  $F =\{1, 2, \ldots, c \}$.  
Note that nonzero terms of $f_T$  are ($\pm$ of) squarefree monomials of degree $d$. Hence if  $\trm(x^\ba f_T) \ne 0$, then $T$ is of the form 
$$
\ytableausetup
{mathmode, boxsize=3.2em}
\begin{ytableau}
i_1 &  \cdots &i_{d-c} & i_{d-c+1} & i_{d-c+2} &\cdots  & i_d  \\
j_1 & \cdots & j_{d-c} & 1 & 2 & \cdots & c
\end{ytableau}
$$
after a suitable column permutation and permutations of the two boxes in the same columns  ($f_T$ is stable under these permutations up to sign).  Moreover,  
$$T':=
\ytableausetup
{mathmode, boxsize=3.2em}
\begin{ytableau}
i_1 &  \cdots &i_{d-c} & i_{d-c+1} & i_{d-c+2} &\cdots  & i_d  \\
j_1 & \cdots & j_{d-c} 
\end{ytableau}
$$
satisfies the expected condition. 

The last assertion can be proved in a similar way.    
\end{proof}

\begin{thm}\label{Grobner of JSp}
With the above situation, 
\begin{equation}\label{Grobner}
 \Biggl( \bigcup_{F \subset [n] } \{  \, \trm(x^{F} f_T) \mid T \in \Tab (d, d) \}  \setminus \{ 0\} \Biggr ) \cup G(\fm^{\<d+1\>})
\end{equation}
forms a Gr\"obner basis of $\JSp_{(d,d)}$. Here $G(\fm^{\<d+1\>})$ is the set of squarefree monomials of degree $d+1$. 
\end{thm}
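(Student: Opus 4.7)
The plan is to verify that the set $G$ displayed in \eqref{Grobner} satisfies (i) $G \subseteq \JSp_{(d,d)}$, (ii) $(G) = \JSp_{(d,d)}$, and (iii) $\init(G) = \init(\JSp_{(d,d)})$; together these amount to $G$ being a Gr\"obner basis.

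For (i) and (ii), write $\trm(x^F f_T) = x^F f_T - (x^F f_T - \trm(x^F f_T))$ where the first summand lies in $\ISp_{(d,d)}$ and the second, being a sum of monomials in $\fm^{\<d+1\>}$, lies in $\fm^{\<d+1\>}$; hence every element of $G$ is in $\JSp_{(d,d)}$. Specializing $F = \emptyset$ gives $\trm(f_T) = f_T$, because $f_T$ is homogeneous of degree $d$ and a signed sum of squarefree monomials of degree $d$, so no term is ever trimmed; this puts every Specht generator of $\ISp_{(d,d)}$ in $(G)$, and together with $G(\fm^{\<d+1\>}) \subseteq G$ we conclude $(G) = \JSp_{(d,d)}$. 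Lemma~\ref{trm f_T} then identifies the leading term $\init(\trm(x^F f_T)) = x^{2F} \cdot \init(f_{T'})$, where $T' \in \Tab_F(d, d-|F|)$ and $\init(f_{T'}) = \prod_k x_{j_k}$ is the product of the larger entries down the length-two columns of $T'$; after flipping columns so that $i_k < j_k$, this monomial has support $F \sqcup \{j_k\}$ of size exactly $d$.

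For (iii), the inclusion $\init(G) \subseteq \init(\JSp_{(d,d)})$ is automatic, and the monomials outside $\init(G)$ already span $R/(G) = R/\JSp_{(d,d)}$, so it is enough to prove $\dim_K [R/\init(G)]_i \le \dim_K [R/\JSp_{(d,d)}]_i$ in every degree. The natural route is Buchberger's S-pair criterion. S-polynomials among $G(\fm^{\<d+1\>})$ vanish trivially; those between a squarefree degree-$(d+1)$ monomial and some $\trm(x^F f_T)$ should reduce because the trimming operation already absorbs every term of support $\ge d+1$ into $\fm^{\<d+1\>}$; and those between two non-monomial elements $x^{2F} f_{T'}$ and $x^{2F''} f_{T''}$ require the underlying syzygy to be read off from Garnir-type straightening relations among the Specht polynomials of shape $(d,d)$, with any leftover terms of support $\ge d+1$ landing in $\fm^{\<d+1\>}$. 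The main obstacle is precisely this last case, where the doubled supports $F$ and $F''$ overlap only partially and one must track how the Garnir relations behave under the trimming operator; the combinatorial backbone for this bookkeeping is Lemma~\ref{initial terms}, which guarantees that distinct standard tableaux produce distinct initial monomials and thereby pins down exactly which second-row monomials $\prod_k x_{j_k}$ occur on the initial-ideal side.
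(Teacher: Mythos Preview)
Your verification of (i) and (ii) is fine, and you correctly identify Buchberger's criterion and Lemma~\ref{initial terms} as the relevant tools. But part (iii) is only an outline: you explicitly flag the S-pair between two trimmed elements as ``the main obstacle'' and then describe what would have to be checked (``one must track how the Garnir relations behave under the trimming operator'') without actually checking it. That case is the entire content of the theorem, so as written the proposal has a genuine gap.

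The paper's argument closes this gap without any Garnir bookkeeping. Given $\varphi_i = \trm(x^{F_i} f_{T_i})$ and the S-polynomial $\psi = x^{\ba} \varphi_1 \pm x^{\bb} \varphi_2$, one observes via Lemma~\ref{trm f_T} that $\trm(x^{\ba} \varphi_1)$, when nonzero, lies in the subspace
\[
x^{\ba} x^{F_1} x^{G_1}\,\langle\, f_{T'} \mid T' \in \Tab_{G_1}(d,d-c_1)\,\rangle,
\qquad G_1 := \supp(x^{\ba} x^{F_1}),\ c_1 := \#G_1,
\]
which is a monomial shift of the Specht module of shape $(d,d-c_1)$. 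Lemma~\ref{initial terms}(2) then says every nonzero element of such a Specht module has the same initial monomial as some $f_{T'}$ with $T'$ standard, so the division algorithm using the set $\{\trm(x^{G_1} f_T)\}$ reduces it to zero. The relevant dichotomy is not how $F_1$ and $F_2$ overlap but whether $x^{\ba} x^{F_1} = x^{\bb} x^{F_2}$: if equal, the two shifted Specht modules coincide and $\trm(\psi)$ sits in a single one; if not, the two subspaces meet only in $0$, no terms cancel between the two halves of $\trm(\psi)$, and each half reduces to zero independently. Your sketch names Lemma~\ref{initial terms} as the ``combinatorial backbone'' but misses this structural point---that trimming forces the S-polynomial into a shifted Specht module---which is precisely what makes the lemma applicable and obviates any explicit straightening.
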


\begin{proof}
Take $F_1, F_2 \subset [n]$ and  $T_1, T_2 \in \Tab(d, d)$ with $\varphi_1 := \trm(x^{F_1} f_{T_1}) \ne 0$ and  $\varphi_2 := \trm(x^{F_2} f_{T_2}) \ne 0$. 
We have some $\ba, \bb \in \NN^n$ such that the least common multiple of $\init(\varphi_1)$ and  $\init(\varphi_2 )$ 
coincides with $x^\ba \init(\varphi_1) = x^\bb \init(\varphi_2)$.  Note that $x^\ba$ and  $x^\bb$ need not be squarefree, and  $x^\ba \init(\varphi_1) = x^\bb \init(\varphi_2)$ might belong to $\fm^{\<d+1\>}$. These phenomena make the following argument a bit complicated.

We set $\psi := x^\ba \varphi_1 \pm x^\bb \varphi_2$, where we take $\pm$ to cancel the initial terms. By Buchberger's criterion (\cite[Theorem~15.8]{E}),  it suffices to show that $\psi$ can be reduced to 0 modulo  \eqref{Grobner} by the division algorithm. To do this, it suffices to show that $\trm(\psi)$  can be reduced to 0 modulo 
\begin{equation}\label{Grobner small}
 \bigcup_{F \subset [n] } \{  \, \trm(x^{F} f_T) \mid T \in \Tab (d, d) \}  \setminus \{ 0\}. 
\end{equation}

If $\trm(\psi) \ne 0$,  then at least one of the following conditions holds 
\begin{itemize}
\item[(i)] $\trm(x^\ba \varphi_1) \ne 0$, equivalently, for  $G_1 := \supp(x^\ba x^{F_1})$, $x^{G_1}$ divides some non-zero term of $f_{T_1}$. 

\item[(ii)]  $\trm(x^\bb \varphi_2) \ne 0$, equivalently, for  $G_2 := \supp(x^\bb x^{F_2})$, $x^{G_2}$ divides some non-zero term of $f_{T_2}$. 
\end{itemize}

Assume that only (i)  holds. 
Since (ii) is not satisfied now, we have 
$$\trm(\psi)=\trm(x^\ba \varphi_1)=\trm(x^\ba x^{F_1} f_{T_1})=  x^\ba x^{F_1} x^{G_1}f_{T_1'}$$ for some $T_1' \in \Tab_{G_1}(d,d-c_1)$ by Lemma~\ref{trm f_T}, where $c_1:= \# G_1$. Hence $\trm(\psi)$ belongs to 
$$\<\,   \trm(x^\ba x^{F_1} f_T) \mid T \in \Tab(d,d) \, \>=x^\ba x^{F_1} x^{G_1}\<\,    f_{T'} \mid {T'} \in \Tab_{G_1}(d,d-c_1) \, \> .$$
The subset 
\begin{equation}\label{G_1} 
\{  \, \trm(x^{G_1} f_T) \mid T \in \Tab (d, d) \, \}  \setminus \{ 0\}
\end{equation}
of  \eqref{Grobner small} spans the subspace 
$$V_1:=\< \, \trm(x^{G_1} f_{T}) \mid T \in \Tab(d,d) \, \>=  x^{2G_1} \< \,  f_{T'} \mid T' \in \Tab_{G_1}(d,d-c_1) \, \>,$$
which is actually the Specht module of shape $(d, d-c_1)$. In fact, the symmetric group $\fS_{[n] \setminus G_1}$ acts on $V_1$. 
The multiplication $\times (x^\ba x^{F_1}/x^{G_1})$ gives a bijection from $V_1$ to 
$x^\ba x^{F_1} x^{G_1}\<\,    f_{T'} \mid {T'} \in \Tab_{G_1}(d,d-c_1) \, \>$, to which $\trm(\psi)$ belongs, and this bijection preserves the monomial order.  
Hence $\trm(\psi)$ can be reduced to 0 modulo \eqref{G_1} by Lemma~\ref{initial terms} (2).
The case when only (ii)  holds can be proved in the same way.

Next consider the case when both (i) and (ii) are satisfied. Set 
$$V_1 := \<\,   \trm(x^\ba x^{F_1} f_T) \mid T \in \Tab(d,d) \,  \, \> \ \text{and} \  
V_2 := \<\,   \trm(x^\bb x^{F_2} f_T) \mid T \in \Tab(d,d) \,  \, \>. $$
Clearly, $\trm(x^\ba \varphi_1) \in V_1$ and $\trm(x^\bb \varphi_2) \in V_2$. 
If a monomial $x^\be$ appears as a non-zero term of $f \in V_1$ (resp.  $f \in V_2$), then we have $\prod_{e_i \ge 2} x_i^{e_i-1}= x^\ba x^{F_1}$ (resp. $\prod_{e_i \ge 2} x_i^{e_i-1}= x^\bb x^{F_2}$).  
 Hence if  $x^\ba x^{F_1} \ne  x^\bb x^{F_2}$, then  $V_1 \cap V_2 =\{ 0\}$. Therefore, either  $V_1 = V_2$ or $V_1 \cap V_2 =\{ 0\}$ holds. 
If $V_1=V_2$, then we have $\trm(x^\ba \varphi_1) , \trm(x^\bb \varphi_2) \in V_1$, and hence $\trm(\psi) \in V_1$. 
So the situation is essentially the same as the previous cases, and $\trm(\psi)$  is reduced to 0 modulo \eqref{G_1}. If $V_1 \cap V_2 =\{ 0\}$,  then we have 
$$\trm(\psi) = \trm(x^\ba \varphi_1) \pm \trm(x^\bb \varphi_2) \in V_1+ V_2=V_1 \oplus V_2,$$
and  no terms of $\trm(x^\ba \varphi_1) $ and $\trm(x^\bb \varphi_2) $ are canceled. Hence $\trm(\psi) $ can  be reduced to 0 modulo the subset 
$$ (\{  \, \trm(x^{G_1} f_T) \mid T \in \Tab (d, d) \}  \cup \{  \, \trm(x^{G_2} f_T) \mid T \in \Tab (d, d) \}  ) \setminus \{ 0\}$$
of \eqref{Grobner small}. 
In fact, the ``$V_1$-part" and the ``$V_2$-part" can be reduced to 0 individually. 
\end{proof}

\begin{cor}\label{Hilb Jdd char free}
The Hilbert function of $\JSp_{(d,d)}$ does not depend on $\chara(K)$. 
\end{cor}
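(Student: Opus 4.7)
The plan is to use the standard principle that a graded ideal and its initial ideal with respect to any fixed monomial order share the same Hilbert function. Granted this, it suffices to exhibit a system of generators of $\init(\JSp_{(d,d)})$ that is specified in a way manifestly independent of $\chara(K)$.

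The first step is to invoke Theorem \ref{Grobner of JSp}, which produces an explicit Gröbner basis of $\JSp_{(d,d)}$ consisting of the squarefree monomials in $G(\fm^{\<d+1\>})$ together with the nonzero polynomials $\trm(x^F f_T)$ for $F \subset [n]$ and $T \in \Tab(d,d)$. Consequently, $\init(\JSp_{(d,d)})$ is generated by the initial monomials of these Gröbner basis elements.

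The second step is to check that every such initial monomial admits a purely combinatorial description. The generators in $G(\fm^{\<d+1\>})$ are squarefree monomials of degree $d+1$, and are unaffected by the field. For the remaining generators, I would apply Lemma \ref{trm f_T} to rewrite each nonzero $\trm(x^F f_T)$ in the form $x^{2F} f_{T'}$ for some $T' \in \Tab_F(d,d-c)$ with $c=\#F\le d$, and further observe that every such pair $(F,T')$ is realized. The initial monomial is then $x^{2F}\cdot \init(f_{T'})$, and $\init(f_{T'})$ is determined by the combinatorics of $T'$ alone (it is the squarefree monomial on the larger entry of each column of $T'$, in our lex order), with no reliance on the base field.

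Putting these together, both families of generators of $\init(\JSp_{(d,d)})$ are described without reference to $\chara(K)$, so the same monomial ideal arises over every field $K$. Its Hilbert function is therefore independent of $\chara(K)$, and hence so is that of $\JSp_{(d,d)}$. I do not anticipate any significant obstacle; the heavy lifting has already been performed in Theorem \ref{Grobner of JSp} and in Lemmas \ref{initial terms} and \ref{trm f_T}, and this corollary is essentially their extraction. The only mildly delicate point is being careful that the indexing set of initial monomials (pairs $(F,T')$) is itself combinatorial, which is precisely what Lemma \ref{trm f_T} guarantees.
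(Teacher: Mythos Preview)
Your proposal is correct and follows essentially the same approach as the paper: both argue that the Gr\"obner basis supplied by Theorem~\ref{Grobner of JSp} is characteristic-free, whence $\init(\JSp_{(d,d)})$ is the same monomial ideal over every field, and therefore the Hilbert function is independent of $\chara(K)$. Your version is slightly more explicit in unpacking the initial monomials via Lemma~\ref{trm f_T}, whereas the paper is content to note that the Gr\"obner basis elements themselves are defined without reference to the field; this extra care is harmless but not strictly needed.
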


\begin{proof}
In general, a homogeneous ideal $I \subset R$ and its initial ideal $\init(I) :=(\init(f) \mid 0 \ne f \in I)$ have the same Hilbert function, and the Hilbert function of a monomial ideal (e.g.,  $\init(I)$) does not depend on $\chara(K)$.   Since the Gr\"obner basis of $\JSp_{(d,d)}$ does not depend on $\chara(K)$ by Theorem~\ref{Grobner of JSp}, the assertion follows. 
\end{proof}

\section{The proof of the main theorem}
To prove Theorem~\ref{main}, we can extend the base field. So we assume that $\# K=\infty$ in this section.

For a subset $\emptyset \ne F \subset [n]$, set $P_F := (x_i -x_j \mid i,j \in F) \subset R$. Clearly, this is a prime ideal with $\height (P_F) =\# F -1$. 
By \cite[Theorem~3.1]{Y}, $\ISp_{(n-d,d)}$ is a radical ideal, and hence we have 
\begin{equation}\label{(n-d,d) components}
\ISp_{(n-d,d)} =\bigcap_{\substack{F \subset [n] \\ \# F =n-d+1}} P_F
\end{equation}
by \cite[Proposition~2.4]{Y}. 

\begin{lem}[\cite{Y}]\label{(n-d,d) meaning}
For $f \in R$, the following are equivalent. 
\begin{itemize}
\item[(1)] $f \in \ISp_{(n-d,d)}$. 
\item[(2)] Take $\ba=(a_1, \ldots, a_n) \in K^n$. If there is a subset $F \subset [n]$ with $\# F = n-d+1$ such that $a_i=a_j$ for all $i, j \in F$, then we have $f(\ba)=0$.  
\end{itemize}
\end{lem}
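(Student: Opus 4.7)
The plan is to interpret both conditions geometrically and exploit the primary decomposition \eqref{(n-d,d) components}. Recall that $P_F=(x_i-x_j \mid i,j\in F)$ is the ideal of the affine-linear subspace $V_F := V(P_F)=\{\ba\in K^n : a_i=a_j \text{ for all } i,j\in F\}$, and that $\ISp_{(n-d,d)}=\bigcap_{F}P_F$, where $F$ ranges over subsets of $[n]$ with $\# F=n-d+1$.

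First I would handle $(1)\Rightarrow(2)$: assume $f\in \ISp_{(n-d,d)}$ and let $\ba\in K^n$ be constant on some $F$ with $\#F=n-d+1$. Then every generator $x_i-x_j$ of $P_F$ vanishes at $\ba$, so $P_F$ is contained in the maximal ideal of $\ba$; since $f\in P_F$ by \eqref{(n-d,d) components}, we get $f(\ba)=0$.

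For $(2)\Rightarrow(1)$, I would fix an arbitrary $F$ of size $n-d+1$ and show $f\in P_F$; intersecting then yields $f\in \ISp_{(n-d,d)}$. Pick $i_0\in F$; then $\{x_{i_0}\}\cup\{x_j : j\in[n]\setminus F\}$ is a set of $d$ algebraically independent parameters on $V_F$, and the natural surjection $R \twoheadrightarrow R/P_F$ identifies $R/P_F$ with a polynomial ring in $d$ variables over $K$. Condition~(2) says precisely that $f$ vanishes on every $K$-point of $V_F$. Since $K$ is infinite (by the reduction at the start of \S4), a polynomial over $K$ that vanishes identically on $K^d$ is zero; hence the image of $f$ in $R/P_F$ is zero, i.e., $f\in P_F$.

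There is no real obstacle here: the three ingredients are the radical decomposition \eqref{(n-d,d) components}, the fact that $P_F$ is generated by linear forms (so that $V(P_F)$ really is an affine linear subspace and $R/P_F$ is a polynomial ring), and the infinite-field hypothesis, which replaces any appeal to the Nullstellensatz by the elementary statement that a nonzero polynomial over an infinite field cannot vanish on $K^d$.
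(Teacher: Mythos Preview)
Your proof is correct and is precisely the intended expansion of the paper's own argument, which consists of the single line ``Easily follows from \eqref{(n-d,d) components}.'' You have supplied exactly the details that phrase suppresses, including the use of the infinite-field hypothesis declared at the start of \S4 to pass from vanishing on $K$-points of $V(P_F)$ to membership in $P_F$.
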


\begin{proof}
Easily follows from \eqref{(n-d,d) components}. 
\end{proof}

\begin{lem}\label{(n-d,d) S-module}
If  $n-d > d \ge 2$, then we have 
$$\ISp_{(n-d,d)} \cap S = \ISp_{(n-d-1,d)}.$$
Moreover, as graded $S$-modules, we have 
\begin{equation}\label{(n-d,d) decomposition}
(R/\ISp_{(n-d,d)})/(S/\ISp_{(n-d-1,d)}) \cong \bigoplus_{m \ge 1} (S/\ISp_{(n-d,d-1)})(-m).
\end{equation}
\end{lem}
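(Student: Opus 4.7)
The plan is to treat the two assertions separately: first the identity $\ISp_{(n-d,d)}\cap S = \ISp_{(n-d-1,d)}$ via the primary decomposition \eqref{(n-d,d) components}, and then the module isomorphism by exploiting the free decomposition $R=\bigoplus_{m\ge 0}x_n^m S$ as graded $S$-module together with a combinatorial factorization that bridges the two Specht ideals on $[n-1]$.

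For the identity, I would intersect \eqref{(n-d,d) components} with $S$ and split the indexing subsets $F\subset[n]$ of size $n-d+1$ according to whether $n\in F$. For $n\notin F$ we have $P_F\cap S=P_F$, whereas for $n\in F$ the isomorphism $R/(x_n-x_i)\cong S$ (any $i\in F\setminus\{n\}$) identifies $P_F\cap S$ with $P_{F\setminus\{n\}}$. Assembling these contributions gives
$$
\ISp_{(n-d,d)}\cap S \;=\; \ISp_{(n-d,d-1)}\cap \ISp_{(n-d-1,d)},
$$
and the containment $\ISp_{(n-d-1,d)}\subseteq\ISp_{(n-d,d-1)}$ (obvious from the primary decompositions, since every $(n-d+1)$-subset of $[n-1]$ contains an $(n-d)$-subset) collapses the right-hand side to $\ISp_{(n-d-1,d)}$.

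For the isomorphism, note that $(R/\ISp_{(n-d,d)})/(S/\ISp_{(n-d-1,d)}) = R/(\ISp_{(n-d,d)}+S)$ as graded $S$-modules, and that the composition
$$
\Phi:\bigoplus_{m\ge 1} x_n^m S \;\hookrightarrow\; R \;\twoheadrightarrow\; R/(\ISp_{(n-d,d)}+S)
$$
is surjective. It remains to identify $\ker\Phi=\bigoplus_{m\ge 1} x_n^m \ISp_{(n-d,d-1)}$. For ``$\subseteq$'', if $g_0+\sum_{m\ge 1}x_n^m g_m\in\ISp_{(n-d,d)}$ with $g_m\in S$, then evaluating at $(\bb,a_n)$ for $\bb\in K^{n-1}$ with $n-d+1$ equal coordinates on some $F\subset[n-1]$ gives a polynomial in $a_n$ vanishing for every $a_n\in K$ by Lemma~\ref{(n-d,d) meaning}; since $K$ is infinite, each $g_m(\bb)=0$, and the two-row analog of Lemma~\ref{(n-d,d) meaning} yields $g_m\in\ISp_{(n-d,d-1)}$. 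For ``$\supseteq$'', to each $T''\in\Tab(n-d,d-1)$ on $[n-1]$ I attach $T\in\Tab(n-d,d)$ on $[n]$ obtained by placing $n$ at position $(2,d)$; then
$$
f_T \;=\; (x_{T''(1,d)}-x_n)\,f_{T''}\;\in\;\ISp_{(n-d,d)},
$$
giving the key congruence $x_n f_{T''}\equiv x_{T''(1,d)} f_{T''}\pmod{\ISp_{(n-d,d)}}$ with right-hand side in $S$. Induction on $m$ and $S$-linearity then extend this to $x_n^m g\in\ISp_{(n-d,d)}+S$ for every $g\in\ISp_{(n-d,d-1)}$ and $m\ge 1$.

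The main non-routine point is the ``$\subseteq$''-direction of the kernel identification, which needs the primary decomposition $\ISp_{(n-d,d-1)}=\bigcap_{|F|=n-d+1,\, F\subset[n-1]}P_F$ of the auxiliary ideal in $S$ — the same radicality statement \cite[Theorem~3.1]{Y} invoked for \eqref{(n-d,d) components}, applied now to the two-row partition $(n-d,d-1)$ of $n-1$. Everything else is combinatorial bookkeeping around the explicit factorization $f_T=(x_{T''(1,d)}-x_n)f_{T''}$.
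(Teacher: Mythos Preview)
Your proposal is correct and follows essentially the same route as the paper's proof: both parts rely on the primary decomposition \eqref{(n-d,d) components} and its analogue for $(n-d,d-1)$, the tableau factorization $f_T=(x_{i_d}-x_n)f_{T''}$ together with induction on $m$ for one inclusion, and the point-evaluation argument over an infinite field (Lemma~\ref{(n-d,d) meaning}) for the other. The only cosmetic differences are that for the first assertion you record the intermediate identity $\ISp_{(n-d,d)}\cap S=\ISp_{(n-d,d-1)}\cap\ISp_{(n-d-1,d)}$ before collapsing it, whereas the paper discards the redundant components directly, and for the second assertion you package the argument as computing $\ker\Phi$ rather than separately determining the annihilators $(0:_M\overline{x_n^m})$ and the directness of the sum.
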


\begin{proof}
For a subset $F \subset [n-1]$, we set $P'_F := (x_i -x_j \mid i,j \in F) \subset S$. 
It is easy to check that $P_F \cap S =P'_{F \setminus \{n\} }$ for $F \subset [n]$.  
For  $F \subset [n-1]$ and $i \in F$, set $F':= (F \setminus \{ i\}) \cup \{n\}$. Then we have $\height(P_F) = \height(P_{F'})$ but $(P_{F'} \cap S) \subsetneq  (P_F\cap S)$, unless $F=\{i\}$ (in this case, $P_F =(0)$). Hence we have 
\begin{eqnarray*}
\ISp_{(n-d,d)} \cap S &=& \Biggl( \bigcap_{\substack{F \subset [n] \\ \# F =n-d+1}} P_F   \Biggr) \cap S\\
 &=& \bigcap_{\substack{F \subset [n] \\ \# F =n-d+1}} (P_F  \cap S) \\
 &=& \bigcap_{\substack{F \subset [n], n \in F  \\ \# F =n-d+1}} (P_F  \cap S) \\
&=& \bigcap_{\substack{F \subset [n-1]  \\ \# F =n-d}} P'_F \\
&=& \ISp_{(n-d-1,d)}. 
\end{eqnarray*}
Hence we have shown the  first assertion, and we see that $S/\ISp_{(n-d-1,d)}$ can be seen as  an $S$-submodule of $R/\ISp_{(n-d,d)}$ in the natural way. To show the second assertion, set $M:=(R/\ISp_{(n-d,d)})/(S/\ISp_{(n-d-1,d)})$. This is a graded $S$-module. 

We denote the image of $x_n^m \in R$ in $M$ for $m \ge 1$ by $\bxm$.  First, we show that $\ISp_{(n-d,d-1)}  \subset (0:_M \bxm)$. It suffices to show that $f_T \bxm =0$, equivalently, $f_T x_n^m \in S+ \ISp_{(n-d,d)} $ for all 
$$
T = 
\ytableausetup
{mathmode, boxsize=2em}
\begin{ytableau}
i_1 & i_2  & \cdots &i_{d-1} & i_d & \cdots & i_{n-d}  \\
j_1 & j_2 & \cdots & j_{d-1}
\end{ytableau}
\in \Tab(n-d, d-1).
$$
We can prove this by induction on $m$. In fact, if we set  
$$
T' = 
\ytableausetup
{mathmode, boxsize=2em}
\begin{ytableau}
i_1 & i_2  
& \cdots &i_{d-1} & i_d & i_{d+1}&\cdots & i_{n-d}  \\
j_1 & j_2 
& \cdots & j_{d-1} & n
\end{ytableau}
\in \Tab(n-d, d),
$$
we have $f_{T'} = f_T \cdot (x_{i_d}-x_n)$, and hence $f_T x_n= f_T x_{i_d}-f_{T'} \in  S+ \ISp_{(n-d,d)}$  (note that $x_{i_d} \in S$). If $m >1$, then 
$$ f_T x_n^m = (f_T x_n)x_n^{m-1}= ( f_T x_{i_d}-f_{T'} )x_n^{m-1}=(f_T x_n^{m-1})x_{i_d}-f_{T'}x_n^{m-1}.$$ 
Here $f_{T'}x_n^{m-1} \in \ISp_{(n-d,d)}$, and $f_T x_n^{m-1} \in S+\ISp_{(n-d,d)}$ by the induction hypothesis.  
Hence $(f_T x_n^{m-1})x_{i_d} \in  S+\ISp_{(n-d,d)}$, and 
$$ f_T x_n^m =(f_T x_n^{m-1})x_{i_d}-f_{T'}x_n^{m-1} \in S+\ISp_{(n-d,d)}.$$

Next we show that 
\begin{equation}\label{direct sum}
M \cong \bigoplus_{m \ge 1} S \cdot \bxm,
\end{equation}
as $S$-modules. Here  $S \cdot \bxm$ is the $S$-submodule generated by $\bxm \in M$, and it does not mean this is a free $S$-module. 
To do this, assume that 
$$\sum_{i=1}^l f_i \bxi =0$$
for some $l \ge 1$ and $f_1, \ldots, f_l \in S$. Then we have 
$$\sum_{i=1}^l f_i x_n^i \in  S+\ISp_{(n-d,d)}.$$
Hence there is some $f_0 \in S$ such that 
$$\sum_{i=0}^l f_i x_n^i \in \ISp_{(n-d,d)}.$$
Take $\ba=(a_1, \ldots, a_{n-1}) \in K^{n-1}$. If there is a subset $F \subset [n-1]$ with $\# F = n-d+1$ such that $a_i=a_j$ for all $i, j \in F$, then we have 
$$\sum_{i=0}^l f_i(\ba) b^i =0$$
for all $b \in K$ by Lemma~\ref{(n-d,d) meaning}.  Since $\# K =\infty$ now, we have 
$$\sum_{i=0}^l f_i(\ba) x_n^i =0.$$
 Hence we have $f_i(\ba)=0$ for all $i$, and $f_i \in \ISp_{(n-d,d-1)}$ by Lemma~\ref{(n-d,d) meaning}. Since we have shown that $\ISp_{(n-d,d-1)} \bxi=0$ for $i \ge 1$,  we get the direct sum \eqref{direct sum}. 
Moreover, the above argument also shows that  $\ISp_{(n-d,d-1)}  \supset (0:_M \bxi)$. Hence we have  $\ISp_{(n-d,d-1)}  = (0:_M \bxi)$, and $S \cdot \bxi \cong S/\ISp_{(n-d,d-1)} $. So we are done.  
\end{proof}

If $n=2d+1$, $\ISp_{(d,d,1)} \subset R$ is a radical ideal by \cite[Theorem~4.2]{Y}, and we have 
\begin{equation}\label{(d,d,1) components}
\ISp_{(d,d,1)} =\bigcap_{\substack{F \subset [n] \\ \# F =d+1}} P_F
\end{equation}
by \cite[Proposition~2.4]{Y}.

\begin{lem}
If  $n=2d \ge 4$, then we have 
$$\ISp_{(d,d)} \cap S = \ISp_{(d-1,d-1,1)}.$$
Moreover, as graded $S$-modules, we have 
\begin{equation}\label{(d,d) decomposition}
(R/\ISp_{(d,d)})/(S/\ISp_{(d-1,d-1,1)}) \cong \bigoplus_{m \ge 1} (S/\ISp_{(d,d-1)})(-m).
\end{equation}
\end{lem}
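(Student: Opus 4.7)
The plan is to mirror the proof of Lemma~\ref{(n-d,d) S-module} almost verbatim, with the role of $\ISp_{(n-d-1,d)}$ now played by $\ISp_{(d-1,d-1,1)}$. The two decomposition formulas I will use are $\ISp_{(d,d)}=\bigcap_{F\subset[n],\,\#F=d+1}P_F$ (this is \eqref{(n-d,d) components} in the boundary case $n-d=d$) and $\ISp_{(d-1,d-1,1)}=\bigcap_{F\subset[n-1],\,\#F=d}P'_F$ (this is \eqref{(d,d,1) components} applied to $n-1=2(d-1)+1$).

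For the first assertion, I compute $\ISp_{(d,d)}\cap S=\bigcap_{F\subset[n],\,\#F=d+1}(P_F\cap S)$ using $P_F\cap S=P'_{F\setminus\{n\}}$. Exactly as in Lemma~\ref{(n-d,d) S-module}, the components coming from $F\not\ni n$ are absorbed by those from $F\ni n$: for any $F\subset[n-1]$ with $\#F=d+1$ and any $i\in F$, the set $F':=(F\setminus\{i\})\cup\{n\}$ has $\#F'=d+1$, $n\in F'$, and $P_{F'}\cap S=P'_{F\setminus\{i\}}\subset P'_F$. Hence the surviving intersection ranges over the $d$-subsets of $[n-1]$ of the form $F\setminus\{n\}$ with $n\in F$, and by the displayed decomposition this equals $\ISp_{(d-1,d-1,1)}$.

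For the $S$-module decomposition, set $M:=(R/\ISp_{(d,d)})/(S/\ISp_{(d-1,d-1,1)})$ and write $\bxm$ for the image of $x_n^m$ in $M$. Given $T\in\Tab(d,d-1)$ with top row $i_1,\ldots,i_d$ and bottom row $j_1,\ldots,j_{d-1}$ (using letters from $[n-1]$), appending $n$ to the bottom row produces $T'\in\Tab(d,d)$ with $f_{T'}=f_T(x_{i_d}-x_n)$. The same induction on $m$ as in Lemma~\ref{(n-d,d) S-module} then gives $f_Tx_n^m\in S+\ISp_{(d,d)}$, and hence $\ISp_{(d,d-1)}\subset(0:_M\bxm)$ for all $m\ge 1$. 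For the reverse inclusion and the direct sum $M\cong\bigoplus_{m\ge 1}S\cdot\bxm$, I invoke Lemma~\ref{(n-d,d) meaning} in two forms: once for $\ISp_{(d,d)}\subset R$ (since $(d,d)=(n-d,d)$ with $n=2d$), and once for $\ISp_{(d,d-1)}\subset S$ (since $(d,d-1)=((n-1)-(d-1),d-1)$). Given a lift $\sum_{i=0}^{l}f_ix_n^i\in\ISp_{(d,d)}$ of a relation $\sum_{i=1}^{l}f_i\bxi=0$, evaluating at $(\ba,b)$ for any $\ba\in K^{n-1}$ possessing a subset $F\subset[n-1]$ of size $d+1$ with equal coordinates and any $b\in K$ forces $\sum_i f_i(\ba)b^i=0$ identically in $b$; since $\#K=\infty$, each $f_i(\ba)=0$, and therefore $f_i\in\ISp_{(d,d-1)}$ by Lemma~\ref{(n-d,d) meaning} applied in $S$.

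The only genuine subtlety is book-keeping on the subset sizes: the ``$d+1$'' governing the decomposition of $\ISp_{(d,d)}\subset R$ happens to coincide with the ``$d+1$'' appearing in the vanishing criterion for $\ISp_{(d,d-1)}\subset S$, so the very same subset $F\subset[n-1]$ that forces the lift to vanish on a point also forces each $f_i$ to lie in $\ISp_{(d,d-1)}$. This numerical alignment, together with the fact that $(d,d)$ and $(d,d-1)$ both still fall under the two-row format of Lemma~\ref{(n-d,d) meaning}, is what permits the proof of Lemma~\ref{(n-d,d) S-module} to transfer essentially unchanged; I do not expect any step to be genuinely harder than in the previous lemma.
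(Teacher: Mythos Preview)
Your proposal is correct and follows exactly the approach the paper intends: the paper's own proof simply says the assertion follows from the argument of Lemma~\ref{(n-d,d) S-module}, using \eqref{(d,d,1) components} in place of \eqref{(n-d,d) components}, and you have faithfully spelled out those details. Your observation about the numerical coincidence of the subset size $d+1$ is correct, though note that the analogous coincidence ($n-d+1$ for both $\ISp_{(n-d,d)}\subset R$ and $\ISp_{(n-d,d-1)}\subset S$) already occurs in the proof of Lemma~\ref{(n-d,d) S-module}, so it is not special to this case.
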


\begin{proof}
The assertion follows from the argument similar to the proof of Lemma~\ref{(n-d,d) S-module}, while we use \eqref{(d,d,1) components} this time. 
\end{proof}

\begin{cor}\label{Induction}If $n-d >d \geq 2$, then we have
\begin{equation}\label{Hilb (n-d,d)}
H(R/\ISp_{(n-d,d)}, t) = H(S/\ISp_{(n-d-1,d)}, t) + \frac{t}{1-t} H(S/\ISp_{(n-d,d-1)}, t).
\end{equation}
Similarly, if $n=2d$ and $d \ge 2$, we have 
 \begin{equation}\label{Hilb (d,d)}
H(R/\ISp_{(d,d)}, t) = H(S/\ISp_{(d-1,d-1,1)}, t) + \frac{t}{1-t} H(S/\ISp_{(d,d-1)}, t).
\end{equation}
\end{cor}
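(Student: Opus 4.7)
The plan is to obtain both identities as immediate consequences of the graded module decompositions established in Lemma~\ref{(n-d,d) S-module} and the subsequent lemma, using only the additivity of Hilbert series on short exact sequences of finitely generated graded $S$-modules.

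For \eqref{Hilb (n-d,d)}, the first assertion of Lemma~\ref{(n-d,d) S-module} gives a graded inclusion $S/\ISp_{(n-d-1,d)} \hookrightarrow R/\ISp_{(n-d,d)}$, and the isomorphism \eqref{(n-d,d) decomposition} identifies the cokernel as a graded $S$-module. This yields the short exact sequence of graded $S$-modules
$$0 \too S/\ISp_{(n-d-1,d)} \too R/\ISp_{(n-d,d)} \too \bigoplus_{m \ge 1} (S/\ISp_{(n-d,d-1)})(-m) \too 0.$$
Each term has finite-dimensional graded components (since $R$ is a finitely generated $K$-algebra and $S \hookrightarrow R$ is a graded subring), so the Hilbert series are well-defined formal power series and additivity applies.

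The only computation left is the Hilbert series of the direct sum on the right. Since a shift $(-m)$ multiplies the Hilbert series by $t^m$, I would compute
$$H\!\left(\bigoplus_{m \ge 1} (S/\ISp_{(n-d,d-1)})(-m),\, t\right) = \left(\sum_{m \ge 1} t^m\right) H(S/\ISp_{(n-d,d-1)}, t) = \frac{t}{1-t}\,H(S/\ISp_{(n-d,d-1)}, t),$$
where the geometric series is taken in $\ZZ[[t]]$. Adding $H(S/\ISp_{(n-d-1,d)}, t)$ gives \eqref{Hilb (n-d,d)}.

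Identity \eqref{Hilb (d,d)} is proved in exactly the same way, using the short exact sequence coming from \eqref{(d,d) decomposition}; the only change is that the submodule contribution is $H(S/\ISp_{(d-1,d-1,1)}, t)$ and the quotient contribution is $\tfrac{t}{1-t}\,H(S/\ISp_{(d,d-1)}, t)$. There is no real obstacle here: all genuine work—the identification of the submodule and of the graded cokernel—was already carried out in the two preceding lemmas, and the present corollary is simply the translation of those module-theoretic statements into generating-function identities.
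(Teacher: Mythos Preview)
Your proposal is correct and follows essentially the same approach as the paper: both derive the identities directly from the graded decompositions \eqref{(n-d,d) decomposition} and \eqref{(d,d) decomposition}, using additivity of Hilbert series and the geometric series $\sum_{m\ge 1}t^m=\tfrac{t}{1-t}$.
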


\begin{proof}
The first assertion follows from \eqref{(n-d,d) decomposition}. In fact, we have 
\begin{eqnarray*}
H(R/\ISp_{(n-d,d)}, t) &=& H(S/\ISp_{(n-d-1,d)}, t) +\sum_{m=1}^\infty t^m \cdot  H(S/\ISp_{(n-d,d-1)}, t)\\
&=& H(S/\ISp_{(n-d-1,d)}, t) + \frac{t}{1-t} H(S/\ISp_{(n-d,d-1)}, t).
\end{eqnarray*}
Similarly, the second assertion follows from  \eqref{(d,d) decomposition}. 
\end{proof}

Now we can start the proof of the main theorem. 

\medskip

\noindent{\it The proof of Theorem~\ref{main}.} 
We prove the assertion  by induction on $n$. Note that we also have to prove that the Hilbert series of $R/\ISp_{(n-d,d)}$ and $R/\ISp_{(d,d,1)}$ do not depend on $\chara(K)$. 

It is easy to see that $R/\ISp_{(n-1,1)} \cong K[X]$, and its Hilbert series is $1/(1-t)$. So the assertion holds in this case. 
Similarly, $R/\ISp_{(1,1,1)}$ is a hypersurface ring of degree 3, and its Hilbert series is $(1+t +t^2)/(1-t)^2$. So the assertion also holds in this case. 

We assume that the statement holds for $n-1$.
If  $n$ is an odd number $2d+1$, we first treat $\ISp_{(d,d,1)}$.  Recall that  $\ISp_{(d,d)} \subset K[x_1, \ldots, x_{n-1}]$, and    let  $\pi : R \to S \, (\cong R/(x_n))$ be the natural surjection. 
Clearly, we have $ S/\pi(\ISp_{(d,d,1)}) \cong R/(\ISp_{(d,d,1)} +(x_n))$. 
As shown in \cite[\S2]{Y}, $x_n$ is $R/\ISp_\lambda$-regular for any non-trivial partition $\lambda$ of $n$. So we can recover the Hilbert series of $R/\ISp_{(d,d,1)}$ from that of 
$ S/\pi(\ISp_{(d,d,1)})$.  

Since $\pi(\ISp_{(d,d,1)}) = \ISp_{(d,d)} \cap \fm^{\<d+1\>}$ by \cite[Lemma~2.10]{Y}, we have the short exact sequence  
\begin{equation}\label{Jdd}
0 \to S/\pi(\ISp_{(d,d,1)}) \too S/\ISp_{(d,d)} \oplus S/\fm^{\<d+1\>} \too 
S/\JSp_{(d,d)} \too 0. 
\end{equation}
The Hilbert series of $S/\ISp_{(d,d)}$ (resp. $S/\JSp_{(d,d)} $) do not depend on $\chara(K)$ by the induction hypothesis (resp. Corollary \ref{Hilb Jdd char free}). 
Since $\fm^{\<d+1\>}$ is a monomial ideal, its Hilbert series is also characteristic free. 
Hence $H(S/\pi(\ISp_{(d,d,1)}),t)$ does not depend on $\chara(K)$ by \eqref{Jdd}. 
So the same is true for $H(R/\ISp_{(d,d,1)},t)$. 

So we may assume that $\chara(K)=0$, then $R/\ISp_{(d,d,1)}$ is Cohen-Macaulay.
The number of minimal generators of $\ISp_{(d,d,1)}$ is $\#\SYT(d,d,1)$, and we have 
\begin{eqnarray*}
\#\SYT(d,d,1)=\frac{(2d+1)!}{(d+2)d!(d+1)(d-1)!}=\binom{2d+1}{d+2}
\end{eqnarray*}
by the hook formula (c.f., \cite[Theorem~3.10.2]{Sa}). 
Since $R/\ISp_{(d,d,1)}$ is Cohen-Macaulay and $\height(\ISp_{(d,d,1)})=d$, 
$\ISp_{(d,d,1)}$ has a $(d+2)$-linear resolution by \cite[Theorem 3.2]{RV}. Hence, 
\begin{eqnarray*}
H(R/\ISp_{(d,d,1)},t)={\displaystyle\frac{\sum_{i=0}^{d+1}\binom{i+d-1}{i}t^i}{(1-t)^{d+1}}}
\end{eqnarray*}
by \cite[Exercises 5.3.16]{V}. So the assertion holds in this case.

Next, we consider the Hilbert series of $R/\ISp_{(n-d,d)}$ with $n-d > d \ge 2$ (without the assumption that $\chara(K)=0$). By the induction hypothesis, $H(S/\ISp_{(n-d-1,d)}, t)$ and $H(S/\ISp_{(n-d,d-1)}, t)$ do not depend on $\chara(K)$, and we have
$$H(S/\ISp_{(n-d-1,d)}, t)= \frac{1+h'_1 t +h'_2 t^2+\cdots + h'_d t^d }{(1-t)^d}$$
with 
$$
h'_i = \begin{cases}
\binom{n-d+i-2}{i} & \text{if $1 \le i \le d-1$,} \\
\binom{n-2}{d-2} &  \text{if $ i = d$,}
\end{cases}
$$
and 
$$H(S/\ISp_{(n-d,d-1)}, t)= \frac{1+h''_1 t +h''_2 t^2+\cdots + h''_{d-1} t^{d-1} }{(1-t)^{d-1}}$$
with 
$$
h''_i = \begin{cases}
\binom{n-d+i-1}{i} & \text{if $1 \le i \le d-2$,} \\
\binom{n-2}{d-3} &  \text{if $ i = d-1$.}
\end{cases}
$$
Since $\dim R/\ISp_{(n-d,d)}=d$, the denominator of the Hilbert series of $H(R/\ISp_{(n-d,d)}, t)$ is $(1-t)^d$, and the numerator is 
$$1+(h'_1+1)t+ (h'_2+h''_1)t^2+\cdots +  (h'_{d-1}+h''_{d-2}) t^{d-1}+ (h'_d+h''_{d-1}) t^d$$
by \eqref{Hilb (n-d,d)}.  Now, we have 
$$h'_1+1 = (n-d-1)+1 =n-d,$$ 
$$h'_i+h''_{i-1}= \binom{n-d+i-2}{i} + \binom{n-d+i-2}{i-1} =  \binom{n-d+i-1}{i}$$
for $2 \le i \le d-1$, and 
$$h'_d+h''_{d-1}= \binom{n-2}{d-2}  + \binom{n-2}{d-3}  =  \binom{n-1}{d-2}.$$
So the assertion holds in this case.

Next, assuming  that  $n=2d$, we consider the Hilbert series of $R/\ISp_{(d,d)}$ with $d \ge 2$. By the induction hypothesis, $H(S/\ISp_{(d,d-1)}, t)$ does not depend on $\chara(K)$, and we have  
$$H(S/\ISp_{(d,d-1)}, t)= \frac{1+h''_1 t +h''_2 t^2+\cdots + h''_{d-1} t^{d-1} }{(1-t)^{d-1}}$$
with 
$$
h''_i = \begin{cases}
\binom{d+i-1}{i} & \text{if $1 \le i \le d-2$,} \\
\binom{2d-2}{d-3} &  \text{if $ i = d-1$.}
\end{cases}
$$
We have already shown that
$$H(S/\ISp_{(d-1,d-1,1)}, t)= \frac{1+h'_1 t +h'_2 t^2+\cdots + h'_d t^d }{(1-t)^d}$$
with 
$$
h'_i = \binom{d+i-2}{i}.
$$
Since $\dim R/\ISp_{(d,d)}=d$, the denominator of the Hilbert series of  $H(R/\ISp_{(d,d)}, t)$ is $(1- t)^d$, and the numerator is 
$$1+(h'_1+1)t+ (h'_2+h''_1)t^2+\cdots +  (h'_{d-1}+h''_{d-2}) t^{d-1}+ (h'_d+h''_{d-1})t^d$$
by \eqref{Hilb (d,d)}.  Now, we have 
$$h'_1+1 = (d-1)+1 =d,$$ 
$$h'_i+h''_{i-1}= \binom{d+i-2}{i} + \binom{d+i-2}{i-1} =  \binom{d+i-1}{i}$$
for $2 \le i \le d-1$, and 
$$h'_d+h''_{d-1}= \binom{2d-2}{d}  + \binom{2d-2}{d-3}  =   \binom{2d-2}{d-2}  + \binom{2d-2}{d-3}  = \binom{2d-1}{d-2}.$$
So we are done.  

\qed


\begin{thebibliography}{8}
\bibitem{E}
D. Eisenbud, Commutative Algebra with a view toward algebraic geometry, Graduate texts in Mathematics 150, Springer-Verlag, 1995.  

\bibitem{EGL} P. Etingof, E. Gorsky, and I. Losev, Representations of rational Cherednik algebras with minimal support and torus knots,  Adv. Math.  {\bf 277} (2015), 124--180. 

\bibitem{RV} C. Renter\'ia and R. H. Villarreal, Koszul homology of Cohen-Macaulay rings with linear resolutions,  Proc. Amer. Math. Soc.  {\bf 115} (1992), 51--58. 

\bibitem{Sa} B.E. Sagan, The Symmetric Group. Representations, Combinatorial Algorithms, and Symmetric Functions, second edition, Graduate texts in Mathematics 203, Springer-Verlag,  2001.



\bibitem{V} R.H.~Villarreal. Monomial Algebras, second edition, Monographs and Research Notes in Mathematics, Chapman and Hall/CRC Press, Boca Raton, FL, 2015.

\bibitem{WY} J. Watanabe and K. Yanagawa, Vandermonde determinantal ideals, Math. Scand. {\bf 125} (2019),  179-184. 

\bibitem{Y} K. Yanagawa, When is a Specht ideal Cohen-Macaulay?   J. Commut. Algebra (to appear), arXiv:1902.06577.  
\end{thebibliography}
\end{document}